
\documentclass[11pt]{article}
\usepackage{amsfonts}

\usepackage{graphicx}
\usepackage{latexsym}
\usepackage{amsmath}
\usepackage{layout}


\newtheorem{lemma}{Lemma}

\newtheorem{corollary}{Corollary}
\newtheorem{theorem}{Theorem}

\textwidth 5.6in \textheight 7.2in \evensidemargin -1pt
\oddsidemargin -1pt \topskip -2in \topmargin 3pt
\parindent0.5in

\begin{document}
\title{ Estimation of the drift of  fractional Brownian motion }
\author{ Khalifa Es-Sebaiy $^{1, 3}\quad$ Idir Ouassou $^{2}\quad$ Youssef
Ouknine $^{1}\vspace*{0.1in}$ \\
$^{1}$ Department of Mathematics, Faculty of Sciences Semlalia,\\
Cadi Ayyad University 2390 Marrakesh, Morocco. \vspace*{0.1in}\\
$^{2}$ ENSA, Cadi Ayyad University,  Marrakesh, Morocco,\vspace*{0.1in}\\
$^{3}$SAMOS/MATISSE, Centre d'Economie de La Sorbonne,\\
Universit\'e de Panth\'eon-Sorbonne Paris 1,\\
90, rue de Tolbiac, 75634 Paris Cedex 13, France.} \maketitle
\begin{abstract}
We consider the problem of efficient estimation for  the drift of
fractional Brownian motion $B^H:=\left(B^H_t\right)_{ t\in[0,T]}$
with hurst parameter $H$ less than $\frac{1}{2}$. We  also construct
superefficient James-Stein type estimators which dominate, under the
usual quadratic risk, the natural maximum likelihood  estimator.

\end{abstract}

{\small \textbf{Key words :} Fractional Brownian Motion, Stein
estimate, MLE}



\vskip0.2cm

\noindent {\small \textbf{2000 Mathematics Subject Classification:}
60G15, 62G05, 62B05, 62M09.}



\section{Introduction}
Fix $H\in(0,1)$ and $T>0$. Let $B^{H}
=\left\{(B^{H,1}_t,...,B^{H,d}_t);t\in[0,T]\right\}$ be a
$d$-dimensional fractional Brownian motion (fBm) defined on the
probability space $\left(\Omega,\mathcal{F},P\right)$. That is,
$B^{H}$ is a zero mean Gaussian vector whose components are
independent one-dimensional fractional Brownian motions with Hurst
parameter $H\in(0,1)$, i.e., for every $i=1,...,d$ $B^{H,i}$ is a
Gaussian process and covariance function given by
\[E(B_s^{H,i}B_t^{H,i})=\frac{1}{2}\left(s^{2H}+t^{2H}-|t-s|^{2H}\right),\quad s,t\in[0,T].\]
For each $i=1,\ldots,d$, $\left(\mathcal{F}_t^i\right)_{t\in[0,T]}$
denotes the filtration generated by
$\left(B^{H,i}_t\right)_{t\in[0,T]}$.
\\The fBm was first introduced by  \cite{kolmogorov} and studied by
 \cite{MV}. Notice that if
$H=\frac{1}{2}$, the process $B^{\frac{1}{2}}$ is a standard
Brownian motion.   However, for $H\neq \frac{1}{2}$, the fBm is
neither a Markov process, nor a semi-martingale.

Let $M$ be a subspace of the Cameron-Martin space defined by
\begin{eqnarray*}M=\left\{\varphi:[0,T]\rightarrow
\mathbb{R}^d; \varphi_t^i=\int_0^t\dot{\varphi}_s^ids \mbox{ with
}\dot{\varphi}^i\in L^2([0,T])  \right.\\
\left.\phantom{\int_0^t}\mbox{ and  }\varphi^i\in
I_{0^+}^{H+\frac{1}{2}}\left(L^2([0,T]) \right), i=1,...,d
\right\}.\end{eqnarray*} Let
$\theta=\left\{(\theta_t^1,\ldots,\theta_t^d);t\in[0,T]\right\}$ be
a function belonging to $M$. Then, Applying Girsanov theorem (see
Theorem 2 in \cite{NO}), there exist a probability measure
absolutely continuous with respect to $P$ under which  the process
$\widetilde{B}^H$ defined by

\begin{eqnarray}\label{girsanovtranform}\widetilde{B}^H_t=B^H_t-\theta_t,\qquad t\in[0,T]\end{eqnarray} is a
fBm with Hurst parameter $H$ and mean zero.  In this case, we say
that, under the probability $P_{\theta}$, the process $B^H$ is a fBm
with drift $\theta$.

 We  consider in this paper the problem of estimating the drift
$\theta$ of  $B^H$ under the probability $P_{\theta}$, with hurst
parameter $H<1/2$. We wish to estimate $\theta$ under the usual
quadratic risk, that is defined for any estimator $\delta$ of
$\theta$ by
$${\cal
R}(\theta,\delta)=E_{\theta}\left[\int_0^T||\delta_t-\theta_t||^2
dt\right]$$ where $E_{\theta}$ is the expectation with respect to a
probability $P_{\theta}$.

 Let $X= (X^1,\ldots , X^d)$ be a normal vector
with  mean $\theta = ({\theta}^1, \ldots , {\theta}^d)\in
~\mathbb{R}^d$ and identity covariance matrix $\sigma^2I_d$.
  The usual maximum likelihood   estimator  of ${\theta}$   is $X$ itself. Moreover, it is efficient in the sense that the
   Cramer-Rao bound over all unbiased  estimators is attained by $X$. That is
$$\sigma^2d=E\left[ \|X-{\theta}\|^2_d\right]=\inf_{\xi\in \mathcal{S}}E\left[ \|\xi-{\theta}\|^2_d\right],  $$
where $\mathcal{S}$ is the class of unbiased estimators of
${\theta}$ and $\|.\|_d$ denotes the Euclidean norm on
$\mathbb{R}^d$.

 \cite{c-s-56} constructed biased superefficient estimators of
${\theta}$ of the form
$$\delta_{a,b}(X) = \left(1 -\frac{ b}{ a +
||X||^2}  \right)X$$  for $a$ sufficiently small and $b$
sufficiently large when $d\geq3$.  \cite{J-S-61} sharpened later
this result and presented an explicit class of biased superefficient
estimators of the form
$$\left(1-\frac{a}{||X||_d^2}\right)X,\  \mbox{for}\ 0 < a < 2(d - 2).$$

Recently, an infinite-dimensional extension of this result  has been
given by  \cite{PR}. The  authors constructed unbiased estimators of
the drift $\left(\theta_t\right)_{t\in [0,T]}$ of a continuous
Gaussian martingale $\left(X_t\right)_{t\in [0,T]}$ with quadratic
variation $\sigma_t^2dt$, where $\sigma\in L^2([0,T],dt)$ is an a.e.
non-vanishing function. More precisely, they proved that
$\hat{\theta}= \left(X_t\right)_{t\in [0,T]}$ is an efficient
estimator of $\left(\theta_t\right)_{t\in [0,T]}$. On the other
hand, using Malliavin calculus, they constructed superefficient
estimators for the drift
  of a Gaussian processe of the form:
$$X_t:=\int_0^tK(t,s)dW_s,\qquad t\in [0,T],$$
where $(W_t)_{t\in [0,T]}$ is a standard Brownian motion and
$K(.,.)$ is a deterministic kernel. These estimators are biased and
of the form $X_t + D_t \log F$,  where $F$ is a positive
superharmonic random variable and $D$ is the Malliavin derivative.

 In Section 3, we prove, using technic
based on the fractional calculus and  Girsanov theorem,  that
$\widehat{\theta}=B^H$ is an
 efficient estimator of $\theta$ under the probability $P_{\theta}$  with risk
\[{\cal
R}(\theta,B^H)=E_\theta\left[\int_0^T\|B_t^H-\theta_t\|^2dt\right]=\frac{T^{2H+1}}{2H+1}d.\]
 Moreover, we will establish  that
$\hat{\theta}=B^H$ is a maximum likelihood estimator of $\theta$.

In Section 4, we construct a class  of biased estimators of
James-Stein type of the form
$$\delta(B^H)_t=\left(1-at^{2H}\left(\frac{r(\|B^H_t\|^2)}{\|B^H_t\|^2}\right)\right)B^H_t,\qquad  t\in[0,T].$$
We give sufficient conditions on the function $r$ and on the
constant $a$ in order that $\delta(B^H)$ dominates $B^H$ under the
usual quadratic risk i.e.
\begin{eqnarray}\label{jemes-stein}{\cal
R}\left(\theta,\delta\left(B^H\right)\right)<{\cal
R}\left(\theta,B^H\right) \qquad  \mbox{ for all } \theta\in M.
\end{eqnarray}

\section{Preliminaries }
This section contains the elements from fractional calculus that we
will need in the paper.
\\
 The fractional Brownian motion $B^H$ has the
following stochastic integral
 representation (see for instance, \cite{AMN}, \cite{N2})

 \begin{eqnarray}\label{repre integral} B^{H,i}_t=\int_0^tK_H(t,s)dW^i_s,\quad
 i=1,...,d;\quad t\in[0,T]
 \end{eqnarray}
 where $W=(W^1,...,W^d)$ denotes the d-dimensional  Brownian
 motion and the kernel $K_H(t,s)$ is equal to
 \begin{eqnarray*}c_H(t-s)^{H-\frac{1}{2}}+c_H(\frac{1}{2}-H)\int_s^t(u-s)^{H-\frac{3}{2}}
 \left(1-(\frac{s}{u})^{\frac{1}{2}-H}\right)du\quad \mbox{ if }
 H\leq\frac{1}{2}\\c_H(H-\frac{1}{2})\int_s^t(u-s)^{H-\frac{3}{2}}
 \left(\frac{s}{u}\right)^{H-\frac{1}{2}}du\quad \mbox{ if }
 H>\frac{1}{2},
 \end{eqnarray*} if $s<t$ and $K_H(t,s)=0$ if $s\geq t$. Here $c_H$ is the normalizing constant
  \[c_H=\sqrt{\frac{2H\Gamma(\frac{3}{2}-H)}{\Gamma(H+\frac{1}{2})\Gamma(2-2H)}}\] where $\Gamma$ is the Euler function.
\\
 We recall some basic definitions and results on classical fractional calculus which we will
 need. General information about
fractional calculus can be found in \cite{SKM}.\\ The left
fractional Riemann-Liouville integral of $f\in L^1((a,b))$ of order
$\alpha>0$ on $(a, b)$ is given at almost all $x\in(a,b)$ by
\[I_{a^+}^{\alpha}f(x)=\frac{1}{\Gamma(\alpha)}\int_a^x(x-y)^{\alpha-1}f(y)dy.\]
If $f\in I_{a^+}^{\alpha}(L^p(a,b))$ with $0<\alpha<1$ and $p>1$
then the left-sided Riemann-Liouville derivative of $f$ of order
$\alpha$ defined
by\[D_{a^+}^{\alpha}f(x)=\frac{1}{\Gamma(1-\alpha)}\left(\frac{f(x)}{(x-a)^\alpha}+\alpha\int_a^x\frac{f(x)-f(y)}{(x-y)^{\alpha+1}}dy\right)\]
for almost all $x\in(a, b).$ \\For $H\in(0,1)$, the integral
transform
\[(K_Hf)(t)=\int_0^tK_H(t,s)f(s)ds\]
is a isomorphism from $L^2([0,1])$ onto
$I_{0^+}^{H+\frac{1}{2}}\left(L^2([0,1])\right)$ and its inverse
operator $K_H^{-1}$ is given by
\begin{eqnarray}K_H^{-1}f&=&t^{H-\frac{1}{2}}D_{0^+}^{H-\frac{1}{2}}t^{\frac{1}{2}-H}f'\quad\mbox{ for } H>1/2,\\
K_H^{-1}f&=&t^{\frac{1}{2}-H}D_{0^+}^{\frac{1}{2}-H}t^{H-\frac{1}{2}}D_{0^+}^{2H}f\quad\mbox{
for }H<1/2.
\end{eqnarray}
Moreover, for $H<\frac{1}{2}$, if $f$ is an  absolutely continuous
function then $K_H^{-1}f$ can be represented  of the form ( see
\cite{NO} )\begin{eqnarray}\label{inverse of
K_h}K_H^{-1}f&=&t^{H-\frac{1}{2}}I_{0^+}^{\frac{1}{2}-H}t^{\frac{1}{2}-H}f'.
\end{eqnarray}

\section{The maximum likelihood estimator and Cramer-Rao type bound}
  We consider a  function
$\theta=\left(\theta^1,\ldots,\theta^d\right)$ belonging to $M$. An
estimator   $\xi=(\xi^1,\ldots,\xi^d)$  of
$\theta=(\theta^1,\ldots,\theta^d)$ is called unbiased if, for every
$t\in[0,T]$
 $${E}_\theta(\xi_t^{i})=\theta_t^{i},\qquad i=1,\ldots,d$$
 and it is called adapted if, for each $i=1,\ldots,d$, $\xi^i$ is
 adapted to $\left(\mathcal{F}_t^i\right)_{t\in[0,T]}$.
\\
 Since for any $i=1,...,d$, the function $\theta^i$ is
deterministic  and
\[\int_0^T(K_H^{-1}(\theta^i)(s))^2ds<\infty,\] then
Girsanov theorem yields that there exists a probability measure
${P_{\theta}}$  absolutely continuous with respect to $P$ under
which the process $\widetilde{B}^H:=\left(\widetilde{B}^H_t;
t\in[0,T]\right)$ defined by

\begin{eqnarray}\label{girsanovtranform}\widetilde{B}^H_t=B^H_t-\theta_t,\qquad t\in[0,T]\end{eqnarray} is a
d-dimensional fBm with Hurst parameter $H$ and mean zero. Moreover
the Girsanov density   of ${P_{\theta}}$ with respect to $P$ is
given by:

 \[\frac{d{P_{\theta}}}{dP}=
 \exp\left[\sum_{i=1}^{d}\left(\int_0^TK_H^{-1}(\theta^i)(s)dW^i_s-\frac{1}{2}\int_0^T(K_H^{-1}(\theta^i)(s))^2ds\right)\right]\]
 and
 \[\widetilde{B}^H_t=\int_0^tK_H(t,s)d\widetilde{W}_s\] where $\widetilde{W}$ is a d-dimensional Brownian motion under the probability
 ${P_{\theta}}$
 and  \[\widetilde{W}_t^i=W_t^i-\int_0^tK_H^{-1}(\theta^i)(s)ds,\quad i=1,...,d; \quad t\in[0,T].\]
 The equation (\ref{girsanovtranform}) implies that $B^H$  is an unbiased and
 adapted estimator  of $\theta$ under probability $P_{\theta}$.
 In addition, we obtain the Cramer-Rao type bound:
 \[R(H,\hat{\theta}):={\cal
R}(\theta,B^H)=\int_0^T{E}_\theta\|\widetilde{B}_t^H\|^2dt=
 d\int_0^Tt^{2H}dt=\frac{T^{2H+1}}{2H+1}d.\]

 The first main  result of this section is given by  the following proposition which asserts that  $\widehat{\theta}=B^H$ is an
efficient estimator of $\theta$.
\begin{theorem}Assume that $H<\frac{1}{2}$. If $\xi$ is an unbiased and adapted estimator of
$\theta$, then
\begin{eqnarray}\label{cramer rao}{E}_\theta\int_0^T\|\xi_t-\theta_t\|^2dt\geq
R(H,\hat{\theta}).
\end{eqnarray}
\end{theorem}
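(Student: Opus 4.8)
The plan is to transfer the problem to the driving Brownian motion via the isomorphism $K_H$ and then run an infinite-dimensional Cram\'er--Rao argument based on the exponential form of the Girsanov densities. By independence of the $d$ coordinates it suffices to treat $d=1$ and add the one-dimensional bounds at the end, and we may assume ${E}_\theta\int_0^T(\xi_t-\theta_t)^2\,dt<\infty$, since otherwise (\ref{cramer rao}) is trivial. Under $P_\theta$ the process $\widetilde{B}^H_t=B^H_t-\theta_t=\int_0^tK_H(t,s)\,d\widetilde{W}_s$ is a standard fBm with $\widetilde{W}$ a Brownian motion, and for every $h\in M$, setting $g:=K_H^{-1}h\in L^2([0,T])$, one has $\frac{dP_{\theta+h}}{dP_\theta}=\exp\!\big(\delta(g)-\tfrac12\|g\|_{L^2}^2\big)$, where $\delta(g):=\int_0^Tg(s)\,d\widetilde{W}_s$ is centred Gaussian with variance $\|g\|_{L^2}^2$ under $P_\theta$.

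First I would extract a covariance identity from the unbiasedness of $\xi$ along the one-parameter perturbations $\lambda\mapsto P_{\theta+\lambda h}$ (admissible since $M$ is linear). Writing $Y_t:=\xi_t-\theta_t$, the relations ${E}_{\theta+\lambda h}[\xi_t]=\theta_t+\lambda h_t$ and ${E}_\theta[\xi_t]=\theta_t$ give ${E}_\theta\big[Y_t\,\lambda^{-1}\big(\frac{dP_{\theta+\lambda h}}{dP_\theta}-1\big)\big]=h_t$. A short computation with Gaussian exponential moments shows $\big\|\lambda^{-1}\big(\frac{dP_{\theta+\lambda h}}{dP_\theta}-1\big)-\delta(g)\big\|_{L^2(P_\theta)}^2=\lambda^{-2}\big(e^{\lambda^2\|g\|^2}-1-\lambda^2\|g\|^2\big)\to0$ as $\lambda\to0$, so pairing against $Y_t\in L^2(P_\theta)$ and using Cauchy--Schwarz to pass to the limit,
\[{E}_\theta\big[Y_t\,\delta(g)\big]=h_t=(K_Hg)(t),\qquad g=K_H^{-1}h,\ h\in M.\]

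Next I would read this identity inside the first Wiener chaos $\mathcal{H}_1:=\{\delta(g):g\in L^2([0,T])\}$ of $\widetilde{W}$ under $P_\theta$, on which $\delta$ is an isometry. Let $\pi_t$ be the $L^2(P_\theta)$-orthogonal projection of $Y_t$ onto $\mathcal{H}_1$. Since $\widetilde{B}^H_t=\delta\big(K_H(t,\cdot)\big)\in\mathcal{H}_1$ (note $\int_0^tK_H(t,s)^2\,ds=t^{2H}<\infty$) and ${E}_\theta\big[\widetilde{B}^H_t\,\delta(g)\big]=\int_0^tK_H(t,s)g(s)\,ds=(K_Hg)(t)$, the displayed identity says that $\pi_t$ and $\widetilde{B}^H_t$ have the same inner product with every $\delta(K_H^{-1}h)$, $h\in M$. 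Invoking Section 2, $K_H^{-1}(M)$ is dense in $L^2([0,T])$ (by (\ref{inverse of K_h}) it contains $K_H^{-1}$ of every primitive of a $\varphi\in C_c^\infty((0,T))$, namely $t^{H-\frac12}I_{0^+}^{\frac12-H}\big(t^{\frac12-H}\varphi\big)$, and such functions are dense); hence $\{\delta(K_H^{-1}h):h\in M\}$ is dense in $\mathcal{H}_1$ and therefore $\pi_t=\widetilde{B}^H_t$ in $L^2(P_\theta)$. Consequently ${E}_\theta[Y_t^2]\ge{E}_\theta[\pi_t^2]={E}_\theta[(\widetilde{B}^H_t)^2]=t^{2H}$ for a.e. $t\in[0,T]$; integrating in $t$ and summing the $d$ independent coordinates yields ${E}_\theta\int_0^T\|\xi_t-\theta_t\|^2\,dt\ge d\int_0^Tt^{2H}\,dt=\frac{T^{2H+1}}{2H+1}d=R(H,\hat{\theta})$.

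The delicate points are the first step together with the density claim: one must justify rigorously the passage from the family of unbiasedness constraints to the covariance identity (this is where the integrability and measurability built into the hypotheses on $\xi$ enter), and one must verify that the Cameron--Martin perturbations $h\in M$ fill $L^2([0,T])$ densely after applying $K_H^{-1}$. Both rest on the mapping properties of $K_H$ and $K_H^{-1}$ recalled in Section 2; once they are in place, the conclusion is the elementary Hilbert-space projection inequality.
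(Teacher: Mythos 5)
Your proposal is correct in substance but takes a genuinely different route from the paper's. Both arguments start from the same covariance identity obtained by differentiating the unbiasedness constraint along the Girsanov perturbations $\theta\mapsto\theta+\lambda h$; your $L^2(P_\theta)$-convergence of the difference quotient $\lambda^{-1}\bigl(dP_{\theta+\lambda h}/dP_\theta-1\bigr)\to\delta(K_H^{-1}h)$ is in fact a more careful justification of the interchange of $\tfrac{d}{d\varepsilon}$ and ${E}$ that the paper performs only formally. The divergence is in how the identity is exploited. The paper picks the single test direction $\psi_t=t^{2H}$, computes $K_H^{-1}(t^{2H})=2H\Gamma(\tfrac12+H)t^{H-\frac12}$ explicitly, applies Cauchy--Schwarz, and then checks via $z\Gamma(z)=\Gamma(z+1)$ and $\Gamma\le 1$ on $[1,2]$ that the resulting constant is at most $1$; adaptedness of $\xi$ is used there to localize the stochastic integral to $[0,t]$. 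You instead use the whole family of directions and identify the orthogonal projection of $\xi_t-\theta_t$ onto the first Wiener chaos of $\widetilde{W}$ as $\widetilde{B}^H_t$, which delivers the sharp bound ${E}_\theta[(\xi_t-\theta_t)^2]\ge t^{2H}$ with no Gamma-function computation and, notably, without ever invoking adaptedness; your argument also depends only on $K_HK_H^{-1}=\mathrm{id}$ and not on the precise normalizing constants in the explicit formula for $K_H^{-1}$, on which the paper's chain of inequalities hinges. The price is the density of $K_H^{-1}(M)$ in $L^2([0,T])$, which you assert but only sketch: this is the one step you must actually prove, and it is genuinely needed in your scheme, because the optimal direction $g=K_H(t,\cdot)$ corresponds to $h=R_H(t,\cdot)$, whose derivative behaves like $|u-t|^{2H-1}$ near $u=t$ and hence fails to lie in $M$ when $H<\tfrac14$. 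The claim is true and your route to it works: primitives of $C_c^\infty((0,T))$ functions do lie in $M$ (since $I^1_{0^+}(L^2)\subseteq I^{H+\frac12}_{0^+}(L^2)$), and the density of the family $t^{H-\frac12}I_{0^+}^{\frac12-H}\bigl(t^{\frac12-H}\varphi\bigr)$ in $L^2$ follows from the injectivity of the adjoint operator $I_{T^-}^{\frac12-H}$; write this out before considering the proof complete.
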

\begin{proof}  Since $\xi$  is unbiased, then for every $\varphi\in
M$ we have
\[{E}_{\varphi}(\xi_t^{j})={E}_{\varphi}(\varphi_t^{j}),\qquad j=1,\ldots,d.\]
Let $\varphi=\theta+\varepsilon\psi$ with $\psi \in M$ and
$\varepsilon\in\mathbb{R}$. Then for every $t\in[0,T]$  and
$j\in\{1,\ldots,d\}$, we have
\begin{eqnarray*}{E}_{\theta+\varepsilon\psi}(\xi_t^{j})&=&{E}_{\theta+\varepsilon\psi}(\theta_t^{j}+\varepsilon\psi_t^{j})\\
&=&{E}_{\theta+\varepsilon\psi}(\theta_t^{j})+\varepsilon\psi_t^{j}.
\end{eqnarray*}
This implies that for every $j=1,\ldots,d$
\begin{eqnarray*}\psi_t^j&=&\frac{d}{d\varepsilon}_{/\varepsilon=0}{E}_{\theta+\varepsilon\psi}(\xi_t^{j}-\theta_t^{j})\\
&=&E\left(\frac{d}{d\varepsilon}_{/\varepsilon=0}
\exp\left[\sum_{i=1}^{d}\left(\int_0^tK_H^{-1}(\theta^i+\varepsilon\psi^i)(s)dW_s^i\right.\right.
\right.\\&&\left.\left.\left.\qquad\qquad\qquad\qquad\quad-\frac{1}{2}\int_0^t(K_H^{-1}(\theta^i+\varepsilon\psi^i)(s))^2
)
ds\right)\right](\xi_t^j-\theta_t^j)\right)\\
&=&{E}_{\theta}\left(\sum_{i=1}^{d}\left[\int_0^tK_H^{-1}(\psi^i)(s)dW_s^i-\int_0^tK_H^{-1}(\psi^i)(s)K_H^{-1}
(\theta^i)(s)ds\right]\right.\\&&\qquad\qquad\qquad\qquad\qquad\qquad\qquad\qquad\qquad\left.\phantom{\int_.^.}\times(\xi_t^{j}-\theta_t^{j})\right)\\&=&
{E}_{\theta}\left(\sum_{i=1}^{d}\left[\int_0^tK_H^{-1}(\psi^i)(s)d\widetilde{W}_s^i\right](\xi_t^j-\theta_t^{j})\right)\\&=&
{E}_{\theta}\left(\left[\int_0^tK_H^{-1}(\psi^j)(s)d\widetilde{W}_s^j\right](\xi_t^j-\theta_t^{j})\right).
\end{eqnarray*}
Applying Cauchy-Schwarz inequality in $L^2(\Omega,d{P}_{\theta})$,
we obtain that for every $t\in[0,T]$
\begin{eqnarray*}\|\psi_t\|^2=\sum_{j=1}^{d}(\psi_t^j)^2&\leq&\sum_{j=1}^{d}{E}_{\theta}\left((\xi_t^{j}-\theta_t^{j})^2\right)
{E}_{\theta}\left(\left[\int_0^tK_H^{-1}(\psi^{j})(s)d\widetilde{W}_s^{j}\right]^2\right)\\
&&=\sum_{j=1}^{d}{E}_{\theta}\left[\left((\xi_t^{j}-\theta_t^{j})^2\right)\int_0^t(K_H^{-1}(\psi^{j})(s))^2ds\right].\end{eqnarray*}
We take for each $j=1,\ldots,d$, $\psi_t^{j}={t^{2H}}$.  Since
$t\longrightarrow t^{2H}$ is absolutely continuous function, then by
(\ref{inverse of K_h}), a simple calculation shows that
\begin{eqnarray*}K_H^{-1}(t^{2H})&=&2Ht^{H-\frac{1}{2}}I_{0^+}^{\frac{1}{2}-H}t^{H-\frac{1}{2}}
\\&=&\frac{2H\beta(\frac{1}{2}-H,H+\frac{1}{2})}{\Gamma(\frac{1}{2}-H)}t^{H-1/2}\\&=&2H(\Gamma(\frac{1}{2}+H))t^{H-1/2}.\end{eqnarray*}
It is known that\begin{eqnarray}\label{gamma}
0\leq\Gamma(z)\leq1\quad\mbox{for every } z\in[1,2].\end{eqnarray}
Combining the facts that $z\Gamma(z)=\Gamma(z+1),$ $z>0$,
$2H\leq(H+\frac{1}{2})^2$ and (\ref{gamma}), we obtain
\begin{eqnarray*} dt^{2H}=\|\psi_t\|^2&\leq&
(\Gamma(\frac{3}{2}+H))^2{E}_{\theta}\left(\|\xi_t-\theta_t\|^2\right)\leq{E}_{\theta}\left(\|\xi_t-\theta_t\|^2\right).\end{eqnarray*}
 Hence, by an integration with respect to $dt$, we get
\[R(H,\hat{\theta})=\frac{T^{2H+1}}{2H+1}\leq{E}_{\theta}\int_0^T\|\xi_t-\theta_t\|^2dt.\]
Therefore (\ref{cramer rao}) is satisfied.
\end{proof}
\begin{corollary}The process $\hat{\theta}=B^H$ is a maximum
likelihood estimator of $\theta$.
\end{corollary}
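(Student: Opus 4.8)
The plan is to write down the log-likelihood of the model as a function of the parameter $\theta\in M$ and to identify its maximizer. By the Girsanov density displayed above, the log-likelihood based on the observed trajectory is
\[\ell_T(\theta)=\log\frac{dP_\theta}{dP}=\sum_{i=1}^d\left(\int_0^T K_H^{-1}(\theta^i)(s)\,dW^i_s-\frac12\int_0^T\left(K_H^{-1}(\theta^i)(s)\right)^2ds\right),\]
where, by (\ref{repre integral}), the Brownian motion $W^i$ is a measurable functional of the observed path $B^{H,i}$, so that $\ell_T(\theta)$ is genuinely a statistic. Since $K_H^{-1}$ is an isomorphism from $I_{0^+}^{H+\frac12}(L^2([0,T]))$ onto $L^2([0,T])$, I would substitute $u^i=K_H^{-1}(\theta^i)$ and reduce the problem to maximizing $\sum_{i=1}^d\big(\int_0^T u^i(s)\,dW^i_s-\frac12\int_0^T(u^i(s))^2\,ds\big)$ over $(u^1,\dots,u^d)\in L^2([0,T])^d$.

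Then I would complete the square in each coordinate: writing the Wiener integral formally as $\int_0^T u^i\,dW^i=\int_0^T u^i_s\dot W^i_s\,ds$, each summand equals $-\tfrac12\int_0^T(u^i_s-\dot W^i_s)^2\,ds+\tfrac12\int_0^T(\dot W^i_s)^2\,ds$, and the last term does not involve $\theta$. Equivalently, and this is the rigorous counterpart, I would differentiate $\varepsilon\mapsto\ell_T(\theta+\varepsilon\psi)$ at $\varepsilon=0$ — the same computation as in the proof of the preceding theorem, but without the expectation — to obtain the likelihood equation $\sum_{i=1}^d\int_0^T K_H^{-1}(\psi^i)(s)\big(dW^i_s-K_H^{-1}(\theta^i)(s)\,ds\big)=0$ for every direction $\psi\in M$; density of $\{K_H^{-1}(\psi^i):\psi\in M\}$ in $L^2([0,T])$ then forces $K_H^{-1}(\theta^i)(s)\,ds=dW^i_s$ for each $i$, i.e., after applying $K_H$ and using (\ref{repre integral}), $\theta^i_t=\int_0^t K_H(t,s)\,dW^i_s=B^{H,i}_t$. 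So the unique critical point is $\widehat\theta=B^H$, and since the second variation of $\ell_T$ in the direction $\psi$ is $-\sum_{i=1}^d\int_0^T(K_H^{-1}(\psi^i)(s))^2\,ds<0$, the functional $\ell_T$ is strictly concave and this critical point is its maximum. Hence $\widehat\theta=B^H$ is the maximum likelihood estimator, in agreement with the efficiency established in the theorem.

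The only genuinely delicate point — and the step I expect to be the real obstacle — is that $\dot W^i\notin L^2([0,T])$, so the square-completion and the "solution" $K_H^{-1}(\theta^i)(s)\,ds=dW^i_s$ are formal: strictly speaking the supremum of $\ell_T$ over $M$ is not attained inside $M$, and for $H<\tfrac12$ the process $B^H$ itself lies outside $M$, its paths not being absolutely continuous. The clean way to phrase the conclusion is that $B^H$ is the limit of the maximizers along any sequence $\theta^{(n)}\in M$ for which $K_H^{-1}(\theta^{(n),i})$ approximates white noise — equivalently, $B^H$ is obtained by applying the transform $K_H$ to the (formal) MLE $\dot W^i$ of the reduced parameter $u^i=K_H^{-1}(\theta^i)$, using invariance of the MLE under reparametrization — and it is in this sense that $\widehat\theta=B^H$ is declared the MLE. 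Making that statement precise, rather than the algebra, is where the care is needed.
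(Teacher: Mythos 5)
Your argument is essentially the paper's own proof: differentiate the Girsanov log-likelihood at $\varepsilon=0$ in an arbitrary direction $\psi\in M$, obtain the first-order condition $\int_0^t K_H^{-1}(\psi^i)(s)\,dW^i_s=\int_0^t K_H^{-1}(\psi^i)(s)K_H^{-1}(\hat{\theta}^i)(s)\,ds$, conclude $W^i_t=\int_0^t K_H^{-1}(\hat{\theta}^i)(s)\,ds$, and apply $K_H$ together with (\ref{repre integral}) to get $\hat{\theta}=B^H$. Your closing caveat --- that $\dot W^i\notin L^2([0,T])$ and that for $H<\tfrac12$ the paths of $B^H$ do not lie in $M$, so the ``maximizer'' is only formal --- is a genuine issue that the paper's proof silently passes over, and your concavity remark correctly justifies why the critical point, when it exists, is the maximum.
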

\begin{proof} We have for every $\psi\in
M$\[\frac{d}{d\varepsilon}_{/\varepsilon=0}\exp\left[\sum_{i=1}^{d}\int_0^tK_H^{-1}(\hat{\theta}^i+
\varepsilon\psi^i)(s)dW_s^i-\frac{1}{2}\int_0^t(K_H^{-1}(\hat{\theta}^i+\varepsilon\psi^i)(s))^2
) ds\right]=0.\] Hence
\[\sum_{i=1}^{d}\left(\int_0^tK_H^{-1}(\psi^i)(s)dW_s^i-\int_0^tK_H^{-1}(\psi^i)(s)K_H^{-1}(\hat{\theta^i})(s)ds\right)=0.\]
Which implies that for every $i=1,...,d$
\[E\left(\int_0^tK_H^{-1}(\psi^i)(s)dW_s^i-\int_0^tK_H^{-1}(\psi^i)(s)K_H^{-1}(\hat{\theta^i})(s)ds\right)^2=0.\]
Then, for each  $i=1,...,d$
\[W_t^i=\int_0^tK_H^{-1}(\hat{\theta^i})(s)ds, \qquad t\in[0,T].\]
Therefore by (\ref{repre integral}), we obtain that
$B^H=\hat{\theta}.$
\end{proof}

\section{Superefficient  James-Stein type estimators}
The aim of this section is to construct superefficient estimators of
$\theta$ which dominate, under the usual quadratic risk, the natural
MLE
 estimator $B^H$. The class of estimators considered here are of the form
\begin{eqnarray}
\label{B+g(B,t)} \delta({B}^H)_t={B}^H_t+g({B}^H_t,t),\qquad
t\in[0,T]
\end{eqnarray}where
$g:\mathbb{R}^{d+1}\longrightarrow\mathbb{R}^d$ is a  function. The
problem turns to find  sufficient conditions on $g$ such that
${\cal{R}}\left(\theta,\delta({B}^H)\right)<\infty$ and the risk
difference  is negative, i.e.
 \begin{eqnarray*}
 \Delta{\cal{R}}(\theta) =
{\cal{R}}\left(\theta,\delta({B}^H)\right)-{\cal{R}}\left(\theta,{B}^H\right)<0.
\end{eqnarray*}
In the sequel we assume that the function $g$ satisfies the
following assumption:$$ (A)\left\{ \begin{array}{c}
              E_{\theta}\left[\int_0^T||g({B}^H_t,t)||_d^2\
dt\right]<\infty,
\phantom{E_{\theta}\left[\int_0^T||g({B}^H_t,t)||_d^2\
dt\right]<\infty}  \\
                \mbox{  the partial derivatives }
\partial_ig^i:=\frac{\partial g^i}{\partial x^i},\ i=1,\ldots,n
\mbox{ of $g$ exist. }
              \end{array}
\right.$$   Then
${\cal{R}}\left(\theta,\delta({B}^H)\right)<\infty$. Moreover
\begin{eqnarray*}
 \Delta{\cal{R}}(\theta)&=&
{E}_{\theta}\left[\int_0^T||{B}^H_t+g({B}^H_t,t)-\theta_t||_d^2-||{B}^H_t-\theta_t||_d^2dt\right]\\
&=&
{E}_{\theta}\left[\int_0^T||g({B}^H_t,t)||_d^2+2\sum_{i=1}^d\left(g^i({B}^H_t,t)({B}^{H,i}_t-\theta^i_t)\right)dt\right].
\end{eqnarray*}
In addition,
\begin{eqnarray*}
&&{E}_{\theta}\int_0^T\sum_{i=1}^d\left(g^i(B^H_t,t)({B}^{H,i}_t-\theta^i_t)\right)dt
\\&=&\sum_{i=1}^d\int_0^T (2\pi t^{2H})^{-\frac{d}{2}}\left(\int_{\mathbb{R}^d}g^i(x^1,\ldots,
x^d,
t)(x^i-\theta_t^i)\right.\\&&\qquad\qquad\qquad\qquad\qquad\left.\phantom{\int_.^.}\times{e^{-\frac{\sum_{j=1}^d(x^j-\theta_t^j)2}{2t^{2H}}}}
dx^1\ldots dx^d\right) dt\\&=&\sum_{i=1}^d\int_0^T(2\pi
t^{2H})^{-\frac{d}{2}}\left(\int_{\mathbb{R}^d}t^{2H}\partial_ig^i(x^1,\ldots,
x^d,
t)~~\right.~~\\~~&&~~\qquad\qquad\qquad\qquad\qquad\left.\phantom{\int_.^.}\times{e^{-\frac{\sum_{j=1}^d(x^j-\theta_t^j)2}{2t^{2H}}}}dx^1\ldots
dx^d\right) dt\\&=&\sum_{i=1}^d\int_0^T\left(t^{2H}{E}_{\theta}
\partial_ig^i(B_t^H,t)\right)dt=
{E}_{\theta}\left[
\sum_{i=1}^d\int_0^Tt^{2H}\partial_ig^i(B_t^H,t)dt
 \right].
\end{eqnarray*}
Consequently, the risk difference equals
\begin{eqnarray}\label{risk}  \Delta{\cal{R}}(\theta)=
{E}_{\theta}\left[\int_0^T\left(||g(B_t^H,t)||^2+2t^{2H}\sum_{i=1}^d\partial_ig^i(B_t^H,t)\right)dt\right].
\end{eqnarray}
We can now state the following theorem.
\begin{theorem}
\label{t0} Let $g:\mathbb{R}^{d+1}\longrightarrow\mathbb{R}^d$ be a
 function satisfying (A). A sufficient conditions for the
estimator $\left(B^H_t+g(B^H_t,t)\right)_{t\in[0,T]}$ to dominate
$B^H$ under the usual quadratic risk is
$${E}_{\theta}\left[\int_0^T\left(||g(B_t^H,t)||^2+2t^{2H}\sum_{i=1}^d\partial_ig^i(B_t^H,t)\right)dt\right]<0.$$
\end{theorem}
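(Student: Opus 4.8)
The statement is an immediate consequence of the risk-difference identity (\ref{risk}) that has just been established: that identity says exactly that, under assumption (A),
\[\Delta{\cal R}(\theta)={\cal R}\!\left(\theta,\delta(B^H)\right)-{\cal R}\!\left(\theta,B^H\right)=E_{\theta}\!\left[\int_0^T\!\left(\|g(B_t^H,t)\|^2+2t^{2H}\sum_{i=1}^d\partial_ig^i(B_t^H,t)\right)dt\right].\]
So the plan for the proof is: first note that (A) guarantees ${\cal R}(\theta,\delta(B^H))<\infty$ (using also the already-known value ${\cal R}(\theta,B^H)=\frac{T^{2H+1}}{2H+1}d<\infty$), hence $\Delta{\cal R}(\theta)$ is well defined; then invoke (\ref{risk}); then observe that if the right-hand side of (\ref{risk}) is strictly negative for every $\theta\in M$, then $\Delta{\cal R}(\theta)<0$ for every $\theta\in M$, i.e. ${\cal R}(\theta,\delta(B^H))<{\cal R}(\theta,B^H)$ for all $\theta\in M$, which is precisely the domination property (\ref{jemes-stein}). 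Nothing more is needed.

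The one substantive ingredient, already carried out in the computation preceding the theorem, is the Gaussian integration by parts (Stein's identity) used to rewrite the cross term. Under $P_\theta$ the vector $B^H_t$ is ${\cal N}(\theta_t,t^{2H}I_d)$, since $\widetilde B^H=B^H-\theta$ is a centered fBm with $E_\theta(\widetilde B^{H,i}_t)^2=t^{2H}$; so $B^H_t$ has density $(2\pi t^{2H})^{-d/2}\exp(-\|x-\theta_t\|_d^2/(2t^{2H}))$. Using the elementary relation $(x^i-\theta^i_t)\exp(-\|x-\theta_t\|_d^2/(2t^{2H}))=-t^{2H}\,\partial_{x^i}\exp(-\|x-\theta_t\|_d^2/(2t^{2H}))$ and integrating by parts in $x^i$, the boundary terms at $\pm\infty$ vanish and one gets $E_\theta[g^i(B^H_t,t)(B^{H,i}_t-\theta^i_t)]=t^{2H}E_\theta[\partial_ig^i(B^H_t,t)]$; summing in $i$, integrating in $t$, and applying Fubini (legitimate by the $L^2$-bound in (A)) produces (\ref{risk}).

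The only place where care is genuinely required is this integration-by-parts step: one must check that the hypotheses collected in (A) — square-integrability of $g(B^H_t,t)$ in $(t,\omega)$ and existence of the partial derivatives $\partial_ig^i$ — together with the Gaussian tail, really do force the boundary terms to vanish and permit differentiating under the integral sign (in practice one may also want $g$ to be, say, absolutely continuous in each $x^i$ with a mild growth bound, so that the fundamental theorem of calculus applies inside the Gaussian integral). I expect that to be the main, and essentially the only, obstacle; once it is granted, the theorem is just the observation that the sign of $\Delta{\cal R}(\theta)$ is read off directly from (\ref{risk}).
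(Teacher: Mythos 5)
Your proposal is correct and follows exactly the paper's own route: the paper proves Theorem \ref{t0} by the computation displayed immediately before its statement, namely the Gaussian integration by parts yielding the risk-difference identity (\ref{risk}), from which the domination criterion is read off as the sign of that expression. Your additional caveat about needing absolute continuity in each $x^i$ and vanishing boundary terms to justify the integration by parts is a fair observation that the paper itself glosses over, but it does not change the argument.
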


 As an  application, take $g$ of the form
\begin{eqnarray}\label{james}
g(x,t)=at^{2H}\frac{r\left(\|x\|^2\right)}{\|x\|^2}x,
\end{eqnarray}where $a$ is a constant strictly positive and  $r:\mathbb{R}^+\rightarrow\mathbb{R}^+$ is bounded derivable
function.\\ The next lemma give a sufficient condition for  $g$ in
(\ref{james}) to satisfies the assumption $(A)$.
\begin{lemma}If $d\geq3$ and $H<\frac{1}{2}$ then
\begin{eqnarray}\label{finiteintegral}E\left[\int_0^T\frac{1}{\|B_t^H\|^2}dt\right]<\infty.
\end{eqnarray}
\end{lemma}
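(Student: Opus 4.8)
The plan is to reduce the statement to a one-dimensional computation via Tonelli's theorem, since the integrand is nonnegative. First I would note that $(t,\omega)\mapsto\|B^H_t(\omega)\|^{-2}$ is jointly measurable: the paths of $B^H$ are continuous, and for Lebesgue-almost every $t$ one has $B^H_t\neq 0$ almost surely, so the integrand is well defined and finite almost everywhere on $[0,T]\times\Omega$. Next, for each fixed $t\in(0,T]$, the vector $B^H_t=(B^{H,1}_t,\dots,B^{H,d}_t)$ is a centered Gaussian vector with independent coordinates of common variance $t^{2H}$, so $\|B^H_t\|^2$ has the same law as $t^{2H}Z$, where $Z$ is a chi-square random variable with $d$ degrees of freedom. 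Hence
\[
E\!\left[\frac{1}{\|B^H_t\|^2}\right]=t^{-2H}\,E\!\left[\frac{1}{Z}\right].
\]

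The second step is to invoke the classical value of the negative moment of the chi-square law: for $d\geq 3$ one has $E[Z^{-1}]=\frac{1}{d-2}<\infty$, which follows from the explicit density of $Z$ together with the identity $\Gamma(d/2)=(d/2-1)\,\Gamma(d/2-1)$. This is exactly where the hypothesis $d\geq 3$ is used, since for $d\leq 2$ the density of $Z$ is of order $z^{d/2-1}$ near the origin and the negative moment diverges.

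Finally, applying Tonelli's theorem to interchange expectation and integration,
\[
E\!\left[\int_0^T\frac{dt}{\|B^H_t\|^2}\right]=\int_0^T E\!\left[\frac{1}{\|B^H_t\|^2}\right]dt=\frac{1}{d-2}\int_0^T t^{-2H}\,dt=\frac{T^{1-2H}}{(d-2)(1-2H)},
\]
which is finite precisely because $H<\tfrac12$ makes $2H<1$, so that the singularity of $t^{-2H}$ at the origin is integrable. There is no substantial obstacle here; the two hypotheses enter only through the finiteness of $E[Z^{-1}]$ (needing $d\geq 3$) and the integrability of $t^{-2H}$ near $0$ (needing $H<\tfrac12$).
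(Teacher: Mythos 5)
Your proof is correct and follows essentially the same route as the paper: both reduce the problem via Tonelli to computing $E\bigl[\|B^H_t\|^{-2}\bigr]=Ct^{-2H}$ by scaling out the variance $t^{2H}$ (the paper via an explicit change of variables in the Gaussian integral, you via the chi-square law with $E[Z^{-1}]=\frac{1}{d-2}$), with $d\geq3$ ensuring the constant is finite and $H<\frac12$ ensuring $\int_0^T t^{-2H}\,dt<\infty$. Your version is slightly more precise in that it identifies the exact constant rather than merely bounding the Gaussian integral.
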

\begin{proof}Firstly the integral given by (\ref{finiteintegral})
 is well defined,  because \[(dt\times P)\left((t,w);B^H_t(w)=0\right)=0\]
where $(dt\times P)$ is the product measure.
 \\Using
 the change of variable and $d\geq3$ we see that
\begin{eqnarray*}E\int_0^T\frac{1}{\|B_t^H\|^2}dt=
\int_0^T\frac{dt}{t^{2H}}\int_{\mathbb{R}^d}\frac{e^{-\frac{\|y\|^2}{2}}}{\sqrt{2\pi}\|y\|^2}dy\leq
C\int_0^T\frac{1}{t^{2H}}dt,
\end{eqnarray*} where $C$ is a constant depending only on $d$.
Furthermore, since $H<\frac{1}{2}$ then (\ref{finiteintegral})
holds.
\end{proof}

\begin{theorem}
\label{t1} Assume that  $d\geq3$. If the function $r$, the constant
$a$ and the parameter $H$ satisfy:
\begin{enumerate}
\item[i)] $0\leq r(.)\leq1$
\item[ii)]$r(\cdot)$ is  differentiable
and increasing
\item[iii)] $0<a\leq2(d-2)$ and $ H<1/2$,
\end{enumerate}
then the estimator
$$\delta(B^H)=B^H_t-at^{2H}\frac{r\left(\|B_t^H\|^2\right)}{\|B_t^H\|^2}B_t^H,\qquad
t\in[0,T].$$ dominates $B^H$.
\end{theorem}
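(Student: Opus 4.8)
The plan is to apply Theorem \ref{t0} with $g$ given by (\ref{james}), so everything reduces to verifying that, under the hypotheses (i)--(iii), the quantity
\begin{eqnarray*}
\Delta{\cal R}(\theta)={E}_{\theta}\left[\int_0^T\left(\|g(B_t^H,t)\|^2+2t^{2H}\sum_{i=1}^d\partial_ig^i(B_t^H,t)\right)dt\right]
\end{eqnarray*}
is strictly negative, after first checking via Lemma \ref{finiteintegral} (which needs $d\ge 3$ and $H<1/2$) that $g$ satisfies assumption (A). First I would compute the two pieces of the integrand pointwise. With $g(x,t)=at^{2H}\frac{r(\|x\|^2)}{\|x\|^2}x$ one has $\|g(x,t)\|^2=a^2t^{4H}\frac{r(\|x\|^2)^2}{\|x\|^2}$. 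For the divergence term, writing $u=\|x\|^2$ and $h(u)=r(u)/u$, a direct calculation gives
\begin{eqnarray*}
\sum_{i=1}^d\partial_i g^i(x,t)=at^{2H}\left(d\,h(\|x\|^2)+2\|x\|^2 h'(\|x\|^2)\right)=at^{2H}\left(\frac{(d-2)r(\|x\|^2)}{\|x\|^2}+2r'(\|x\|^2)\right),
\end{eqnarray*}
using $h(u)+2uh'(u)=\frac{r(u)}{u}+2\big(r'(u)-\frac{r(u)}{u}\big)=\frac{(d-2)r(u)}{u}+2r'(u)$... wait, $d\,h+2uh' = d\frac{r}{u}+2u\cdot\frac{r'u-r}{u^2}=d\frac{r}{u}+2r'-2\frac{r}{u}=\frac{(d-2)r}{u}+2r'$. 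Hence the integrand becomes $t^{4H}\left(a^2\frac{r^2}{\|x\|^2}+2a\frac{(d-2)r}{\|x\|^2}+4a r'\right)$ evaluated at $x=B_t^H$.

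Next I would bound this integrand from above using (i)--(iii). Since $r'\ge 0$ (monotonicity, (ii)) the term $4ar'$ is nonnegative and cannot be discarded directly; the standard trick is that $r'\ge 0$ together with $0\le r\le 1$ gives an inconvenient sign, so instead I would group $a^2 r^2 + 2a(d-2)r = ar\big(ar+2(d-2)\big)$ and use $0\le r\le 1$ plus $0<a\le 2(d-2)$ to get $ar+2(d-2)\le a+2(d-2)\le 4(d-2)$, whence $a^2r^2+2a(d-2)r\le 4a(d-2)r$; this is still positive, which is wrong in sign. The correct bound must exploit that the divergence term carries a factor $2$ and that the favourable sign comes from $-(d-2)$ relative to... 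Actually the integrand is $\|g\|^2+2t^{2H}\mathrm{div}\,g$ and $\mathrm{div}\,g$ itself is positive here, so the favourable term is absent unless $r$ were decreasing. The resolution: one shows $a^2r^2 + 2a(d-2)r + 4a\|x\|^2 r' \le a r\big(ar - 2(d-2)\big) + (\text{nonneg})$ is false; rather, since the James--Stein estimator subtracts, $g$ in (\ref{james}) is the correction with a minus sign folded into $\delta$, so effectively the sign in Theorem \ref{t0} applies to $-g$, giving integrand $\|g\|^2 - 2t^{2H}\mathrm{div}\,g = t^{4H}\big(a^2\frac{r^2}{\|x\|^2} - 2a\frac{(d-2)r}{\|x\|^2} - 4a r'\big)$.

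The key step is then: using $r'\ge 0$ drop $-4ar'\le 0$, and using $0\le r\le 1$ bound $a^2r^2\le a^2 r$, so the integrand is $\le t^{4H}\frac{ar}{\|x\|^2}\big(a-2(d-2)\big)\le 0$ since $a\le 2(d-2)$; strict negativity on a set of positive $(dt\times P_\theta)$-measure follows because $r$ is not identically zero (it is positive-valued and, being increasing and nontrivial, $r>0$ on a positive-measure set) and $a<2(d-2)$ strictly, or, if $a=2(d-2)$, from the retained $-4ar'$ term being strictly negative where $r'>0$; integrating against the finite measure guaranteed by Lemma \ref{finiteintegral} gives $\Delta{\cal R}(\theta)<0$. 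The main obstacle is bookkeeping the sign conventions correctly — matching the ``$B^H_t+g$'' form of Theorem \ref{t0} to the ``$B^H_t - at^{2H}(\cdots)$'' form in the statement, i.e.\ taking $g(x,t)=-at^{2H}\frac{r(\|x\|^2)}{\|x\|^2}x$ — and then ensuring the positive term $\|g\|^2$ is genuinely dominated by the negative divergence term uniformly in $t$, which is exactly where $0\le r\le 1$ (to linearize $r^2$), monotonicity of $r$ (to discard $r'$), and $a\le 2(d-2)$ (to close the inequality) each get used, with the finiteness in Lemma \ref{finiteintegral} handling integrability of $\|x\|^{-2}$ near $t=0$ and near $x=0$.
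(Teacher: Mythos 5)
Your proposal is correct and, once you resolve the sign convention by taking $g(x,t)=-at^{2H}r(\|x\|^2)\|x\|^{-2}x$ in Theorem \ref{t0}, it follows essentially the same route as the paper: the same integrand $t^{4H}\bigl(a^2r^2\|x\|^{-2}-2a(d-2)r\|x\|^{-2}-4ar'\bigr)$, closed by the same three bounds ($r^2\le r$ from (i), dropping $-4ar'\le 0$ from (ii), and $a\le 2(d-2)$ from (iii)), with Lemma 1 supplying integrability. Your closing remark on where strict negativity actually comes from is, if anything, more careful than the paper's own one-line conclusion.
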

\begin{proof} It suffices to prove that $\Delta
{\cal{R}}(\theta)<0$. From (\ref{risk}) and the hypothesis  $i)$ and
$ii)$ we can write
\begin{eqnarray*}
\Delta
{\cal{R}}(\theta)&=&a{E}_{\theta}\left[\int_0^Tt^{4H}\left(\frac{ar^2(\|B^H_t\|^2)}{\|B^H_t\|^2}-2(d-2)\frac{r(\|B^H_t\|^2)}{\|B^H_t\|^2}
\right.\right.\\&&\qquad\qquad\qquad\qquad\qquad\qquad\left.\left.\phantom{\int_.^.}-4r'(\|B^H_t\|^2)\right)dt\right]
\\
&\leq&
a\left[a-2(d-2)\right]{E}_{\theta}\left[a\int_0^Tt^{4H}\frac{r(\|B^H_t\|^2)}{\|B^H_t\|^2}\right].
\end{eqnarray*}
Combining this fact with the assumption  $iii)$ yields  that the
risk difference is negative. Which proves the desired result.
\end{proof}\\
For $r=1$, we obtain  a James-Stein type estimator:
\begin{corollary} Let $d\geq3$,
$0<H<\frac{1}{2}$ and $0<a\leq2(d-2)$. Then the  estimator
$$\left(1-\frac{at^{2H}}{\|B_t^H\|^2}\right)B_t^H,\qquad{t\in[0,T]}$$
dominates  $B^H$.
\end{corollary}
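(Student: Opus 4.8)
The plan is to obtain this Corollary as the special case $r\equiv 1$ of Theorem \ref{t1}, so the whole argument reduces to checking that the constant function $r(u)\equiv 1$ is an admissible choice and then reading off the conclusion. First I would observe that $r\equiv 1$ meets all three hypotheses of Theorem \ref{t1}: it is bounded with $0\le r(\cdot)\le 1$, so (i) holds; it is differentiable with $r'\equiv 0$, hence (weakly) increasing, so (ii) holds; and (iii) is exactly the hypothesis $0<a\le 2(d-2)$, $H<1/2$ imposed here, together with $d\ge 3$. With these in hand, Theorem \ref{t1} yields that $B_t^H-at^{2H}\|B_t^H\|^{-2}B_t^H$, which is precisely $\left(1-at^{2H}/\|B_t^H\|^2\right)B_t^H$, dominates $B^H$; that is the assertion.

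For completeness I would spell out the two inputs that Theorem \ref{t1} rests on, specialized to $g(x,t)=-at^{2H}\|x\|^{-2}x$. The partial derivatives of $g$ exist away from the origin, and assumption $(A)$ holds because, repeating under $P_\theta$ the change of variable from the proof of the preceding lemma — now the centred fBm is $\widetilde B^H=B^H-\theta$, so the Gaussian integral is merely shifted to mean $\theta_t/t^H$, which is harmless since $\int_{\mathbb{R}^d}\|y\|^{-2}e^{-\|y-c\|^2/2}\,dy$ is bounded uniformly in $c\in\mathbb{R}^d$ for $d\ge 3$ — one gets $E_\theta\!\left[\int_0^T\|g(B_t^H,t)\|_d^2\,dt\right]=a^2\,E_\theta\!\left[\int_0^T t^{4H}\|B_t^H\|^{-2}\,dt\right]<\infty$. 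Then, using $\sum_{i=1}^d\partial_i\!\left(x^i/\|x\|^2\right)=(d-2)/\|x\|^2$, the risk identity (\ref{risk}) gives
\begin{eqnarray*}
\Delta{\cal R}(\theta) &=& E_\theta\left[\int_0^T \frac{a^2-2a(d-2)}{\|B_t^H\|^2}\,t^{4H}\,dt\right]\\
&=& a\big(a-2(d-2)\big)\,E_\theta\left[\int_0^T\frac{t^{4H}}{\|B_t^H\|^2}\,dt\right]\ \le\ 0,
\end{eqnarray*}
the sign coming from $0<a\le 2(d-2)$ and the finiteness of the expectation from the previous step (this is where $d\ge 3$ and $H<1/2$ are both needed, exactly as in (\ref{finiteintegral})).

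There is essentially no obstacle here, since the Corollary is a direct specialization of Theorem \ref{t1}; the only bookkeeping worth a remark is the passage from the base measure $P$ (in which the lemma is stated) to $P_\theta$ (in which the risk is computed), which is handled by the uniform bound on the Gaussian integral noted above, and the boundary value $a=2(d-2)$, where the displayed identity yields $\Delta{\cal R}(\theta)=0$, so that ``dominates'' is to be understood in the non-strict sense there (it is strict for $a<2(d-2)$).
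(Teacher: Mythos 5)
Your proposal is correct and follows exactly the paper's route: the corollary is obtained as the special case $r\equiv 1$ of Theorem \ref{t1}, whose hypotheses (i)--(iii) you verify, and the paper itself offers no further argument. Your added remarks --- the uniform Gaussian bound justifying finiteness under $P_\theta$ rather than $P$, and the observation that at $a=2(d-2)$ the displayed bound gives $\Delta{\cal R}(\theta)=0$ so domination is only non-strict there --- are sound refinements of details the paper glosses over.
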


\vspace{1cm} \noindent {\bf{Acknowledgement} }\\ The authors would
like to thank the editor Hira Koul and referees for several helpful
corrections and suggestions that led to many improvements in the
paper.




\end{document}